\let\pa\partial
\let\eps\varepsilon
\newcommand{\R}{{\mathbb R}}
\newtheorem{theorem}{Theorem}
\newtheorem{lemma}[theorem]{Lemma}
\begin{document}
	\title{Uniqueness of Weak Solutions to One-Dimensional Doubly Degenerate Cross-Diffusion System\tnoteref{t1}}
	
	\author[a]{Xiuqing Chen\corref{cor}}
	\ead{chenxiuqing@mail.sysu.edu.cn}
	
	\author[a]{Bang Du}
	\ead{dubang@mail2.sysu.edu.cn}
	
	\address[a]{School of Mathematics(Zhuhai), Sun Yat-sen University, Zhuhai 519082, Guangdong Province, China}
	
	\cortext[cor]{Corresponding author.}

	\date{\today}
	\tnotetext[t1]{The authors acknowledge support from the National Natural Science Foundation of China (NSFC),  grant 12471206.}
	
	\begin{abstract}
		The uniqueness of global weak solutions to one-dimensional doubly degenerate cross-diffusion system is shown. The equations model the evolution of feeding bacterial populations in a malnourished environment. The key idea of the proof is applying anti-derivative of the sum of weak solutions to the system.
	\end{abstract}
	
	\begin{keyword}
		Nutrient taxis systems
		\sep doubly degenerate cross-diffusion
		\sep uniqueness
		\MSC[2020]{35K55, 35K67, 35A02, 35Q92, 92C17.}
	\end{keyword}
	
	
	\maketitle
	

	\section{Introduction}
	The aim of this paper is to study the uniqueness of global weak solutions to the initial/boundary-value problem
	\begin{equation}\label{eqt}
		\left\{
		\begin{aligned}
			&u_t = (uv  u_x)_x -  (u^2 v v_x)_x + uv,\;\;&x\in\Omega,\;t>0,\\
			&v_t =  v_{xx} - uv, \;\;&x\in\Omega,\;t>0,\\
			&u v u_x - u^2 v  v_x|_{\pa \Omega} = 0,\,v_x|_{\pa \Omega} = 0,&t>0,\\
			&u|_{t=0}=u^0,\,v|_{t=0}=v^0,&x\in\Omega,\\
		\end{aligned}
		\right .
	\end{equation}
	where $\Omega\subset\R$ is a bounded open interval, which describes the evolution of food-consuming bacterial populations in nutrient-poor environments. The equations have been proposed to describe the dynamics in collections of Bacillus subtilis (\cite{F.C.R}, \cite{RGP}).
	It has been shown by Winkler \cite{winkler_1} that \eqref{eqt} with assumption
	\begin{equation}\label{reg_init_time}
		\left\{
		\begin{aligned}
			&\mbox{~$u^0 \in C^{\vartheta}(\overline{\Omega})$ for some $\vartheta \in (0,1)$,~with~$u^0 \geq 0$ and~$\int_{\Omega} \ln u^0 dx > - \infty $, }\\
			&\mbox{~$v^0 \in W^{1,\infty}(\Omega)$ satisfies $v^0 > 0$ in~$\overline{\Omega}$ }
		\end{aligned}
		\right .
	\end{equation}
	admits a global weak solution $(u,v)$ in the sense that for any $\varphi \in C_0^\infty([0,\infty)\times \overline{\Omega})$,
	\begin{align}
		&-\int_0^\infty \int_{\Omega} u \varphi_t dxdt
		- \int_{\Omega} u^0 \varphi(\cdot,0) dx\label{eqt_u_weak}\\
		&\qquad\qquad
		= -\int_0^\infty \int_{\Omega}uv  u_x \varphi_x dxdt  +\int_0^\infty \int_{\Omega} u^2 v  v_x\varphi_x dxdt
		+ \int_0^\infty \int_{\Omega} uv \varphi dxdt,\nonumber\\
		&-\int_0^\infty \int_{\Omega} v \varphi_t dxdt
		- \int_{\Omega} v^0 \varphi(\cdot,0) dx
		= -\int_0^\infty \int_{\Omega} v_x\varphi_x dxdt
		- \int_0^\infty \int_{\Omega} uv \varphi dxdt,\label{eqt_v_weak}
	\end{align}
	which satisfies
	\begin{equation}\label{est}
		\left\{
		\begin{aligned}
			&0\leq u \in C^0([0,\infty)\times\overline{\Omega}) \cap L^\infty((0,\infty)\times \Omega ),~
			u_x \in L^2_{loc}([0,\infty)\times\overline{\Omega})  ,~\\
			&v \in C^0([0,\infty)\times\overline{\Omega})
			\cap C^{1,2}((0,\infty)\times\overline{\Omega})
			\cap L^\infty((0,\infty)\times \Omega ),~
			v_x \in L^\infty((0,\infty)\times \Omega ),~\\
			&\mbox{for any fixed}~T, ~v \geq c(T) > 0.
		\end{aligned}
		\right .
	\end{equation}
	The two-dimensional case was treated by Zhang and Li \cite{Z.L}.
	
	Our main result reads as follows.
	\begin{theorem}\label{thm_main}
		Let $(u,v)$ be a weak solution to \eqref{eqt}-\eqref{reg_init_time} satisfying \eqref{eqt_u_weak}-\eqref{est}. If
		$1/u^0 \in L^1(\Omega)$, then $(u,v)$ is unique.
	\end{theorem}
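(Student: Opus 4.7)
The strategy is to introduce the spatial antiderivative of the sum $U+V$ and establish a Gronwall inequality for the energy $E(t) := \int_\Omega W^2\,dx + \int_\Omega V^2\,dx$, where $U := u_1-u_2$, $V := v_1-v_2$ are the differences of two weak solutions with common initial data, and $W$ is defined below. The first observation is that summing the two PDEs in \eqref{eqt} cancels the reaction terms,
\[
(u+v)_t = \bigl(uvu_x - u^2 v v_x + v_x\bigr)_x,
\]
which together with the no-flux boundary conditions makes $\int_\Omega(u+v)\,dx$ conserved, hence $\int_\Omega(U+V)(\cdot,t)\,dx \equiv 0$. Writing $\Omega = (a,b)$, the spatial antiderivative $W(x,t) := \int_a^x (U+V)(y,t)\,dy$ therefore vanishes at both endpoints, satisfies $W_x = U+V$, and, integrating the weak form of the sum equation over $(a,x)$ and using the flux boundary condition at $a$, obeys
\[
W_t = u_1 v_1 (u_1)_x - u_2 v_2 (u_2)_x - u_1^2 v_1 (v_1)_x + u_2^2 v_2 (v_2)_x + V_x.
\]

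The hypothesis $1/u^0 \in L^1(\Omega)$ is used to extract an additional a priori estimate. Testing the $u$-equation with a regularization of $-1/u^2$ and exploiting $v \ge c(T) > 0$ will yield
\[
\sup_{t\in[0,T]}\int_\Omega u^{-1}(\cdot,t)\,dx + \int_0^T\!\!\int_\Omega (u_x/u)^2\,dx\,dt \le C(T),
\]
so that $g_i(t) := \int_\Omega \bigl((u_i)_x/u_i\bigr)^2\,dx$ belongs to $L^1(0,T)$ for $i=1,2$. Independently, testing the difference of the two $v$-equations against $V$, substituting $U = W_x - V$ and integrating by parts (all boundary contributions vanish because $W$ does on $\partial\Omega$) produces
\[
\tfrac{d}{dt}\int_\Omega V^2\,dx + \tfrac12\int_\Omega V_x^2\,dx \le C\Bigl(\int_\Omega W^2\,dx+\int_\Omega V^2\,dx\Bigr).
\]

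For the $W$-equation, I would compute $\tfrac12\tfrac{d}{dt}\|W\|_2^2 = \int_\Omega W W_t\,dx$ and integrate by parts term by term, exploiting $W|_{\partial\Omega}=0$ and the identity $uvu_x - u^2 v v_x = \tfrac{v}{2}(u^2)_x - \tfrac{u^2}{2}(v^2)_x$. The principal contribution collapses to
\[
-\tfrac12\int_\Omega (U+V)\bigl[v_1 u_1^2 - v_2 u_2^2\bigr]dx = -\tfrac12\int_\Omega (U+V)\bigl[v_1(u_1+u_2)U + u_2^2 V\bigr]dx,
\]
whose leading piece $-\tfrac12\int v_1(u_1+u_2) U^2$ is nonpositive and, via Young's inequality with weight tuned to the lower bound $v_1 \ge c(T)$, absorbs the cross $UV$-terms at the price of only a $C\int V^2$ remainder. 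Terms whose coefficients are bounded in $L^\infty$ are estimated routinely (with any $V_x^2$-pieces absorbed into the $v$-estimate above), while the residues that carry $(u_i)_x$ — arising from differentiating the coefficients during the integration by parts — are controlled by
\[
\Bigl|\!\int_\Omega W f (u_i)_x\,dx\Bigr| \le C\Bigl(\!\int_\Omega W^2 f^2 u_i^2\,dx\Bigr)^{1/2} \sqrt{g_i(t)}
\]
for bounded $f$, which together with the uniform $L^\infty$-bound on $W$ (following from $\|W_x\|_1 \le C$) produces a multiplicative factor $h(t)\in L^1_{\mathrm{loc}}(0,\infty)$ of $E(t)$.

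Combining the two estimates yields $E'(t) \le h(t) E(t)$ with $h \in L^1_{\mathrm{loc}}(0,\infty)$, and since $E(0)=0$ Gronwall's inequality forces $E\equiv 0$, so that $V\equiv 0$ and $U = W_x - V \equiv 0$, which proves uniqueness. The principal obstacle is the $W$-estimate: the degeneracy rules out any $L^\infty$-control of $(u_i)_x$, and the residues from integration by parts must be reabsorbed by alternating further integration by parts (exploiting $W|_{\partial\Omega}=0$) with Cauchy--Schwarz against $\sqrt{g_i(t)}$, while every $U^2$-contribution has to be funneled into the single good term $-\int v_1(u_1+u_2)U^2$ — a delicate maneuver that crucially relies on the positivity $v_i\ge c(T)$ and on the $L^1$-in-time integrability of $g_i$, which is precisely where the hypothesis $1/u^0 \in L^1(\Omega)$ is used.
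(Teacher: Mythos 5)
Your architecture coincides with the paper's: the antiderivative $W$ of $U+V$, the boundary values $W|_{\partial\Omega}=0$ obtained from conservation of $\int_\Omega(u+v)\,dx$, the energy $E=\int_\Omega W^2+\int_\Omega V^2$, the single good term $-\tfrac12\int_\Omega v_1(u_1+u_2)U^2\le-\tfrac{c(T)}{2}\int_\Omega(u_1+u_2)U^2$ absorbing the cross terms, and Gronwall. Your handling of the $v$-equation is in fact cleaner than the paper's: integrating $-\int_\Omega W_xv_1V$ by parts onto $W$ avoids the weighted Young inequality the paper uses there. (Note the mismatch, though: in the paper the hypothesis $1/u^0\in L^1$ enters exactly at that point, through the $L^\infty_tL^1_x$ bound on $1/u$ used to control the conjugate term $\int_\Omega v_1^2V^2/(u_1+u_2+\delta)\le\|V\|_{L^\infty}^2\int_\Omega(u_1+u_2+\delta)^{-1}$, not through any bound on $\int(u_x/u)^2$.)

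The step you yourself single out as the principal obstacle, however, does not close as stated. The only term carrying a bare $(u_i)_x$ is $\tfrac12\int_\Omega WV(u_2^2)_x\,dx=\int_\Omega WVu_2u_{2x}\,dx$, and your Cauchy--Schwarz against $g_2(t)^{1/2}=\bigl(\int_\Omega(u_{2x}/u_2)^2\bigr)^{1/2}$ yields at best $Cg_2(t)^{1/2}\bigl(\int_\Omega W^2V^2u_2^4\bigr)^{1/2}\le Cg_2(t)^{1/2}\|V\|_{L^2}$, using $\|Wu_2^2\|_{L^\infty}\le C$. This is $h(t)\sqrt{E(t)}$, not $h(t)E(t)$: the derivative $u_{2x}$ consumes one Cauchy--Schwarz slot, so only one factor of $W$ or $V$ survives, and no interpolation ($\|W\|_{L^\infty}\le C\|W\|_{L^2}^{1/2}$, etc.) restores quadratic homogeneity. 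The differential inequality $E'\le h\sqrt{E}$ with $E(0)=0$ and $h\in L^1$ does \emph{not} force $E\equiv0$ (compare $E(t)=\bigl(\tfrac12\int_0^th\bigr)^2$), so uniqueness does not follow from this estimate. The repair is the one the paper uses: integrate $(u_2^2)_x$ by parts onto $WV$ (legitimate since $W|_{\partial\Omega}=0$), producing $-\tfrac12\int_\Omega V_xu_2^2W-\tfrac12\int_\Omega Vu_2^2W_x$, both genuinely quadratic and absorbable into $\varepsilon\int V_x^2+\varepsilon\int(u_1+u_2)W_x^2+C\int W^2+C\int V^2$. After this repair your $L^1_t$ bound on $g_i$ is never needed, and you should re-examine where the hypothesis $1/u^0\in L^1$ actually does work in your argument, since as written it is invoked only in the step that fails.
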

	
	Without loss of generality, we suppose that $\Omega = (0,1)$. The key idea of the proof is the use of $w(t,x) = \int_0^x (u(t,y) + v(t,y)) dy$, such that the uniqueness of $(u,v)$ is equivalent to that of $(w,v)$. More precisely, it follows from the $L^1(\Omega)$-conservation of $u + v$ that $w(t,0)\equiv0$ and $w(t,1)\equiv \|u^0+v^0\|_{L^1(\Omega)}$ which imply that $(w_1 - w_2)|_{\partial\Omega}=0$ in Lemma 4. The boundedness of~$1/u$ in~$L^\infty(0,\infty;L^1(\Omega))$(Lemma \ref{u^-1}) is used to control the term~$\frac{v_1^2(v_1 - v_2)^2}{u_1 + u_2}$ in \eqref{ineqt_v_pre} of Lemma \ref{lem_w^2+v^2}. We obtain Lemma \ref{lem_w^2+v^2} from Lemmas \ref{u^-1}-\ref{lem_w_boundary} and hence the uniqueness of $(w,v)$ by Gronwall's inequality.
	
	\section{Proof of Theorem \ref{thm_main}}
	\begin{lemma}\label{u^-1}
		If $1/u^0 \in L^1(\Omega)$, then $\Vert  u^{-1}  \Vert_{L^\infty(0,\infty;L^1(\Omega))} \leq C$.
	\end{lemma}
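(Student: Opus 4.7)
My plan is to test the first equation of \eqref{eqt} against the regularized multiplier $-(u+\eps)^{-2}$ (a smoothed version of the formal choice $-u^{-2}$) to derive a differential inequality for $\int_\Omega (u+\eps)^{-1}\,dx$ uniform in $\eps$ and $t$, and then pass to the limit $\eps\to 0^+$ by monotone convergence. The key analytic input from outside the $u$-equation itself will be the standard $L^2$ energy estimate coming from the $v$-equation.

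First, identifying $\pa_t(u+\eps)^{-1} = -(u+\eps)^{-2}u_t$, substituting the PDE for $u$ and integrating over $\Omega$, two integrations by parts should produce
\begin{align*}
\frac{d}{dt}\int_\Omega \frac{1}{u+\eps}\,dx = -2\int_\Omega \frac{uv\,u_x^2}{(u+\eps)^3}\,dx + 2\int_\Omega \frac{u^2 v\,v_x\,u_x}{(u+\eps)^3}\,dx - \int_\Omega \frac{uv}{(u+\eps)^2}\,dx.
\end{align*}
Every boundary contribution vanishes: the one generated by the self-diffusion is $\frac{uvu_x}{(u+\eps)^2}\big|_{\pa\Omega}$, which equals zero because $uvu_x|_{\pa\Omega}=0$ (combining the flux condition with $v_x|_{\pa\Omega}=0$), and the one from the cross-diffusion is $\frac{u^2 v v_x}{(u+\eps)^2}\big|_{\pa\Omega}=0$ for the same reason.

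Second, Young's inequality yields
\[
2\cdot \frac{u^2 v\,v_x\,u_x}{(u+\eps)^3} \leq \frac{uv\,u_x^2}{(u+\eps)^3} + \frac{u^3 v\,v_x^2}{(u+\eps)^3} \leq \frac{uv\,u_x^2}{(u+\eps)^3} + v\,v_x^2,
\]
using $u^3/(u+\eps)^3 \leq 1$. Dropping the two resulting nonpositive terms leaves
\[
\frac{d}{dt}\int_\Omega \frac{1}{u+\eps}\,dx \leq \int_\Omega v\,v_x^2\,dx.
\]
To close the estimate I would establish the space-time integrability of $v v_x^2$ by the $L^2$-energy bound for $v$: testing the second equation of \eqref{eqt} against $v$ and using $v_x|_{\pa\Omega}=0$ and $u,v\geq 0$ gives $\tfrac{1}{2}\tfrac{d}{dt}\|v\|_{L^2(\Omega)}^2 + \|v_x\|_{L^2(\Omega)}^2 \leq 0$, so $\int_0^\infty\|v_x\|_{L^2(\Omega)}^2\,dt \leq \tfrac{1}{2}\|v^0\|_{L^2(\Omega)}^2$. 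Combining this with $v\in L^\infty((0,\infty)\times\Omega)$ from \eqref{est} produces a finite, $t$-independent bound on $\int_0^t\int_\Omega v v_x^2\,dx\,ds$. Integrating in time and using the hypothesis $\int_\Omega (u^0)^{-1}\,dx < \infty$ then gives $\int_\Omega (u(t)+\eps)^{-1}\,dx \leq C$ uniformly in $\eps$ and $t$, and the monotone limit $\eps \to 0^+$ concludes the proof.

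The main obstacle is the rigorous justification of testing with the singular multiplier: since $u$ may vanish and the weak solution provided by \cite{winkler_1} only solves its equation in distributional form, the chain-rule manipulations above are not automatic. The $\eps$-regularization is one remedy; alternatively, one can perform the entire identity on the smooth, strictly positive approximations $(u_\delta,v_\delta)$ constructed in \cite{winkler_1}, obtain a bound uniform in $\delta$ (and in $\eps$), and then recover the estimate for the limit $(u,v)$ via Fatou's lemma.
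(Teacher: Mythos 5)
Your proposal is correct and follows essentially the same route as the paper: test with $-(u+\eps)^{-2}$, absorb the cross term by Young's inequality using $u^3/(u+\eps)^3\le 1$, bound the remainder by $\int_\Omega v\,v_x^2\,dx$, integrate in time, and send $\eps\to 0$ (the paper uses Fatou's lemma where you use monotone convergence). The only substantive difference is that you justify the uniform-in-time bound on $\int_0^t\int_\Omega v\,v_x^2\,dx\,ds$ via the $L^2$-energy identity for $v$, whereas the paper simply invokes \eqref{est}; your version is the more explicit one on that point.
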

	\begin{proof}
		Applying a standard mollification procedure,
		we  prove that $t\mapsto \int_{\Omega} \frac{1}{u + \eps} dx$ is continuous on~$[0,T]$ and $\frac{d}{dt}\int_{\Omega} \frac{1}{u + \eps} dx = -\langle \pa_t u, \frac{1}{(u + \eps)^2} \rangle,$ where $\langle \cdot,\cdot \rangle$ is the dual product between $H^1(\Omega)^\prime$ and $H^1(\Omega)$.
		It follows from the weak formulation of \eqref{eqt} and Young inequality that
		\begin{align*}
			&\frac{d}{dt}\int_{\Omega} \frac{1}{u + \eps} dx
			=-2\int_{\Omega} uv  \frac{(u_x)^2}{(u + \eps)^3} dx
			+ 2 \int_{\Omega} u^2 v v_x \frac{u_x}{(u + \eps)^3} dx
			-\int_{\Omega}  uv \frac{1}{(u + \eps)^2} dx\\
			\leq&-2\int_{\Omega} uv  \frac{(u_x)^2}{(u + \eps)^3} dx
			+ \int_{\Omega} uv  \frac{(u_x)^2}{(u + \eps)^3} dx
			+ C \int_{\Omega} u^3 v (v_x)^2 \frac{1}{(u + \eps)^3} dx
			-\int_{\Omega}  uv \frac{1}{(u + \eps)^2} dx \\
			\leq&C \int_{\Omega} u^3 v (v_x)^2 \frac{1}{(u + \eps)^3} dx
			\leq C \int_{\Omega}  v (v_x)^2 dx.
		\end{align*}
		In view of \eqref{est}, one has
		\begin{align*}
			\int_{\Omega} \frac{1}{u + \eps} dx
			\leq C + \int_{\Omega} \frac{1}{u_0 + \eps} dx
			\leq C + \int_{\Omega} \frac{1}{u_0} dx
			\leq C,
		\end{align*}
		and hence the conclusion by Fatou's Lemma.
	\end{proof}
	
	Let $(u_i,v_i)\, (i=1,2)$ be two weak solutions to \eqref{eqt}-\eqref{reg_init_time} satisfying \eqref{eqt_u_weak}-\eqref{est}. Define that  $w_i(t,x) = \int_0^x u_i(t,y) + v_i(t,y) dy\, (i=1,2)$. Let $T > 0$ be any fixed constant.
	\begin{lemma}\label{eqt_w^2}
		For any $t \in (0,T)$, one has
		\begin{align*}
			\int_{\Omega} \frac{1}{2} (w_1 - &w_2)^2 dx
			=\frac{1}{2}\int_0^t\int_{\Omega}   \Big( v_1  (u_1^2)_x
			-  v_2  (u_2^2)_x \Big)  (w_1 - w_2) dxds\\
			&- \int_0^t\int_{\Omega}  (u_1^2 v_1 v_{1x} - u_2^2 v_2 v_{2x})(w_1 - w_2) dxds
			+ \int_0^t\int_{\Omega}   (v_{1x} - v_{2x}) (w_1 - w_2)dxds.
		\end{align*}
	\end{lemma}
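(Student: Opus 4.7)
The plan is to obtain an equation for $w_i$ by integrating the sum of the two equations of \eqref{eqt} in the spatial variable, and then to test the difference $w_1-w_2$ against itself after an integration in time.

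First, I would add the weak formulations \eqref{eqt_u_weak} and \eqref{eqt_v_weak} and use a product test function $\varphi(s,x)=\phi(s)\psi(x)$ with $\psi(x)=\int_x^1\chi(y)\,dy$ for $\phi\in C_0^\infty([0,\infty))$ and $\chi\in C^\infty(\overline{\Omega})$. Because $\psi(1)=0$ and $w_i(\cdot,0)=0$, integration by parts in $x$ on the time derivative term gives $\int_{\Omega}(u_i+v_i)\psi\,dx=\int_{\Omega}w_i\chi\,dx$. The spatial terms already carry a $\psi'=-\chi$ and so produce $\int_{\Omega}(u_iv_iu_{i,x}-u_i^2v_iv_{i,x}+v_{i,x})\chi\,dx$ after one cancellation of the $\pm u_iv_i$ reaction terms. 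This identifies, in the sense of distributions and then — thanks to \eqref{est} — in $L^2_{\mathrm{loc}}((0,T)\times\Omega)$, the identity
\begin{equation*}
\pa_s w_i = u_iv_iu_{i,x}-u_i^2v_iv_{i,x}+v_{i,x}.
\end{equation*}

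Next I would subtract the two identities and multiply by $w_1-w_2$. Since $w_i\in L^\infty$ (as $u_i,v_i\in L^\infty$ on a bounded interval) and $\pa_s w_i\in L^2_{\mathrm{loc}}$, the standard chain rule for the $H^1(0,T;L^2)\cap L^\infty$ framework gives
\begin{equation*}
\tfrac{d}{ds}\int_{\Omega}\tfrac12(w_1-w_2)^2\,dx=\int_{\Omega}(w_1-w_2)\pa_s(w_1-w_2)\,dx.
\end{equation*}
Integrating in time from $0$ to $t$, using $(w_1-w_2)(0,\cdot)=0$ (since both initial data coincide with $\int_0^x(u^0+v^0)\,dy$), and rewriting $u_iv_iu_{i,x}=\tfrac12 v_i(u_i^2)_x$ then yields the claimed identity.

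The routine part is the algebra; the step that requires the most care is the justification of the $w$-equation in a sufficiently strong sense to permit the chain rule. The $L^2_{\mathrm{loc}}$ regularity of each term on the right-hand side follows from $u_{i,x}\in L^2_{\mathrm{loc}}$ and $u_i,v_i,v_{i,x}\in L^\infty$, so no additional estimate beyond \eqref{est} is needed, and Lemma \ref{u^-1} is not used here (it will be needed later). The main obstacle is thus bookkeeping with the weak formulation and the choice of test function that turns $\int(u+v)\psi\,dx$ into $\int w\chi\,dx$; once this is set, the rest is a direct integration by parts in time.
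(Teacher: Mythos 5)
Your argument is correct and rests on the same underlying idea as the paper's proof: derive an evolution equation for the antiderivative $w_i=\int_0^x(u_i+v_i)\,dy$ by testing the weak formulation against (an avatar of) the indicator $1_{[0,x]}$, and then run the $L^2$ energy identity. The execution differs, and yours is leaner: the paper extends $u_i,v_i$ by reflection to $(-1,2)$, mollifies in space, uses the test functions $\rho^\delta*1_{[0,x]}$, and passes to the limit twice before invoking the Newton--Leibniz formula and the boundary conditions; you instead take the admissible product test function $\phi(s)\psi(x)$ with $\psi(x)=\int_x^1\chi(y)\,dy$ and use Fubini to convert $\int_\Omega(u_i+v_i)\psi\,dx$ into $\int_\Omega w_i\chi\,dx$, which is the dual form of the same computation and avoids the reflection/mollification machinery entirely. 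Two points are worth making explicit in a full write-up: (i) $\phi\psi$ does belong to $C_0^\infty([0,\infty)\times\overline{\Omega})$ because test functions in this class need not vanish on $\partial\Omega$ --- this is exactly where the no-flux boundary conditions are encoded, just as in the paper's appeal to them after Newton--Leibniz; and (ii) since $\chi$ ranges over a set dense in $L^2(\Omega)$ and $f_i+v_{ix}\in L^2((0,T)\times\Omega)$ by \eqref{est}, the scalar identities upgrade to $\partial_s w_i=f_i+v_{ix}$ in $L^2(0,T;L^2(\Omega))$, so $w_i\in H^1(0,T;L^2(\Omega))$ and the chain rule $\frac{d}{ds}\frac12\Vert w_1-w_2\Vert_{L^2(\Omega)}^2=\int_\Omega(w_1-w_2)\,\partial_s(w_1-w_2)\,dx$ together with the vanishing initial trace is legitimate; this replaces the paper's ``standard mollification procedure'' for the continuity of $t\mapsto\Vert w_1-w_2\Vert_{L^2(\Omega)}^2$. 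With those details filled in, the stated identity follows exactly as you outline.
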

	\begin{proof}
		In view of \eqref{eqt_u_weak}, for any $\psi \in C_0^\infty([0,T)\times \overline{\Omega})$, one has
		\begin{align}
			&-\int_0^T\int_{\Omega}  u_i   \pa_t \psi dxdt
			-\int_{\Omega}   u_i^0 \psi(0,\cdot) dx
			= -\int_0^T\int_{\Omega} f_i \psi_x  dxdt
			+ \int_0^T\int_{\Omega}  u_i v_i \psi  dxdt,~\nonumber
		\end{align}
		where $f_i = u_i v_i u_{ix} - u_i^2 v_i  v_{ix}.$
		Let $\tilde{ \Omega } = (-1,2)$, extend $u_i$ to~$\tilde{ \Omega }$ by mirroring on $x=0$ and $x=1$, that is
		\begin{align}
			\bar u_i(t,x) = \left\{
			\begin{aligned}
				&u_i(t,-x),~ -1 \leq x < 0,\\
				&u_i(t,x),~ 0 \leq x < 1,\\
				&u_i(t,2-x),~ 1 \leq x < 2.
			\end{aligned}
			\right .
		\end{align}
		Extend $u_i^0,v_i,v_i^0$ in a similar way and denote the extended function as $\bar u_i^0,\bar v_i,\bar v_i^0$. Denote $\bar f = \bar u_i \bar v_i \bar u_{ix} - \bar u_i^2 \bar v_i  \bar v_{ix}$. Then for any $\psi \in C_0^\infty([0,T)\times\tilde{ \Omega })$, one has
		\begin{align}
			&-\int_0^T\int_{\tilde{ \Omega }}  \bar u_i   \pa_t \psi dxdt
			-\int_{\tilde{ \Omega }}   \bar u_i^0 \psi(0,\cdot) dx
			= -\int_0^T\int_{\tilde{ \Omega }} \bar f_i \psi_x  dxdt
			+ \int_0^T\int_{\tilde{ \Omega }}  \bar u_i \bar v_i \psi  dxdt. \label{eqt_olu}
		\end{align}
		Denote $\rho^\eps$ as a standard mollifier with respect to space. Replacing $\psi$ in \eqref{eqt_olu} by $\rho^\eps* \psi$, it follows from the symmetry of $\rho^\eps$ and Fubini's theorem that
		\begin{align*}
			&-\int_0^T\int_{\tilde{ \Omega }}  \rho^\eps*\bar u_i    \pa_t \psi dxdt
			-\int_{\tilde{ \Omega }}  \rho^\eps* \bar u_i^0  \psi(0,\cdot) dx\\
			=& -\int_0^T\int_{\tilde{ \Omega }} \rho^\eps*\bar f_i  \psi_x  dxdt
			+ \int_0^T\int_{\tilde{ \Omega }}  \rho^\eps*(\bar u_i \bar v_i)  \psi  dxdt.
		\end{align*}
		Integrating by parts and replacing $x$ with $y$, one has
		\begin{align}
			&-\int_0^T\int_{\tilde{ \Omega }}  \rho^\eps*\bar u_i    \pa_t \psi dydt
			-\int_{\tilde{ \Omega }}  \rho^\eps* \bar u_i^0  \psi(0,\cdot) dy\nonumber\\
			=& \int_0^T\int_{\tilde{ \Omega }} (\rho^\eps*\bar f_i)_y  \psi  dydt
			+ \int_0^T\int_{\tilde{ \Omega }}  \rho^\eps*(\bar u_i \bar v_i)  \psi  dydt.\label{eqt_olu_2}
		\end{align}
		For the above $\psi \in C_0^\infty([0,T)\times \tilde{ \Omega })$, any fixed $x \in (0,1)$,~let~$\psi_\delta(t,y) = \rho^\delta * 1_{[0,x]}(y) \psi(t,x)$, where $1_{[0,x]}$ is the characteristic function on $[0,1]$. It holds (up to a subsequence) that
		\begin{align}
			\begin{aligned}
				&\psi_\delta \in C_0^\infty([0,T)\times \tilde{ \Omega }),~|\psi_\delta| \leq C,
				\rho^\delta * 1_{[0,x]} \to 1_{[0,x]} ~a.e..
			\end{aligned}
		\end{align}
		Replacing $\psi$ by $\psi_\delta$ in \eqref{eqt_olu_2} and letting $\delta \to 0$, it follows from Lebesgue's Dominated Convergence theorem that for any $\psi \in C_0^\infty([0,T)\times \tilde{ \Omega })$, $x \in (0,1)$,
		\begin{align*}
			&-\int_0^T\left(  \int_0^x  \rho^\eps*\bar u_i    dy  \right) \pa_t \psi(t,x) dt
			-\left(  \int_0^x  \rho^\eps* \bar u_i^0   dy  \right) \psi(0,x)\\
			&\qquad\qquad\qquad\qquad
			= \int_0^T\left(  \int_0^x (\rho^\eps*\bar f_i)_y  dy  \right) \psi(t,x) dt
			+ \int_0^T\left(  \int_0^x  \rho^\eps*(\bar u_i \bar v_i)  dy  \right)  \psi(t,x) dt.\nonumber
		\end{align*}
		Applying the Newton-Leibniz formula, one has
		\begin{align*}
			&-\int_0^T\left(  \int_0^x  \rho^\eps*\bar u_i    dy  \right) \pa_t \psi(t,x) dt
			-\left(  \int_0^x  \rho^\eps* \bar u_i^0   dy  \right) \psi(0,x)\\
			&
			= \int_0^T \rho^\eps*\bar f_i(x)  \psi(t,x) dt
			- \int_0^T \rho^\eps*\bar f_i(0)  \psi(t,x) dt
			+ \int_0^T\left(  \int_0^x  \rho^\eps*(\bar u_i \bar v_i)  dy   \right) \psi(t,x) dt.\nonumber
		\end{align*}
		Integrating on $(0,1)$ with respect to $x$, letting $\eps \to 0$, recalling the boundary conditions and the definition of $\bar f_i$, we have for any $\psi \in C_0^\infty([0,T)\times \tilde{ \Omega })$,
		\begin{align}
			&-\int_0^1\int_0^T\left(  \int_0^x  \bar u_i    dy  \right) \pa_t \psi dtdx
			-\int_0^1\int_0^x  \bar u_i^0   dy \psi(0,\cdot)dx\nonumber\\
			&\qquad\qquad\qquad\qquad\qquad\qquad\qquad\qquad
			= \int_0^1\int_0^T \bar f_i  \psi dtdx
			+ \int_0^1\int_0^T\left(  \int_0^x  \bar u_i \bar v_i  dy  \right)  \psi dtdx. \nonumber
		\end{align}
		Furthermore, the above equation holds for $\psi \in C_0^\infty([0,T)\times \overline{\Omega})$. By the definition of $\bar u_i,\bar u_i^0,\bar f_i,\bar v_i$, one has for any $\psi \in C_0^\infty([0,T)\times \overline{\Omega})$,
		\begin{align}\label{eqt_olu_7}
			&-\int_0^1\int_0^T\left(  \int_0^x  u_i    dy  \right) \pa_t \psi dtdx
			-\int_0^1\int_0^x u_i^0   dy \psi(0,\cdot)dx\nonumber\\
			&\qquad\qquad\qquad\qquad\qquad\qquad
			= \int_0^1\int_0^T f_i  \psi dtdx
			+ \int_0^1\int_0^T\left(  \int_0^x  u_i v_i  dy  \right)  \psi dtdx.
		\end{align}
		Similarly, we have that for any $\psi \in C_0^\infty([0,T)\times \overline{\Omega})$,
		\begin{align}\label{eqt_olv_7}
			&-\int_0^1\int_0^T\left(  \int_0^x  v_i    dy  \right) \pa_t \psi dtdx
			-\int_0^1\int_0^x v_i^0   dy \psi(0,\cdot)dx\nonumber\\
			&\qquad\qquad\qquad\qquad\qquad\qquad
			= \int_0^1\int_0^T g_i  \psi dtdx
			- \int_0^1\int_0^T\left(  \int_0^x  u_i v_i  dy  \right)  \psi dtdx,
		\end{align}
		where $g_i = v_{ix}$.
		Recalling that $w_i = \int_{0}^{x} (u_i + v_i) dy$ and~$u_1^0 = u_2^0,~v_1^0 = v_2^0$, canceling out the last terms in \eqref{eqt_olu_7} and \eqref{eqt_olv_7}, we have for any $\psi \in C_0^\infty([0,T)\times \overline{\Omega})$,
		\begin{align}\label{eqt_w_2}
			-\int_0^1\int_0^T (w_1 - w_2) \pa_t \psi dtdx
			= \int_0^1\int_0^T (f_1 - f_2)  \psi dtdx
			+ \int_0^1\int_0^T (g_1 - g_2)  \psi dtdx.
		\end{align}
		It follows from Fubini's theorem that
		\begin{align}\label{eqt_w_3}
			-\int_0^T\int_{\Omega} (w_1 - w_2) \pa_t \psi dxdt
			= \int_0^T\int_{\Omega} (f_1 - f_2)  \psi dxdt
			+ \int_0^T\int_{\Omega} (g_1 - g_2)  \psi dxdt.
		\end{align}
		Applying a standard mollification procedure, we prove that $t\mapsto \Vert  w_1 - w_2  \Vert_{L^2(\Omega)}^2$ is continuous on $[0,T]$ which together with the definition of $f_i,g_i$ implies that
		\begin{align*}
			&\frac{d}{dt}\int_{\Omega} \frac{1}{2} (w_1 - w_2)^2 dx
			= \int_{\Omega} (f_1 - f_2)(w_1 - w_2)   dx
			+ \int_{\Omega} (g_1 - g_2)(w_1 - w_2)   dx\\
			=&\frac{1}{2} \int_{\Omega}   \Big( v_1  (u_1^2)_x
			-  v_2  (u_2^2)_x \Big)  (w_1 - w_2) dx
			-  \int_{\Omega}  (u_1^2 v_1 v_{1x} - u_2^2 v_2 v_{2x})(w_1 - w_2) dx\\
			&+  \int_{\Omega}   (v_{1x} - v_{2x}) (w_1 - w_2)dx.
		\end{align*}
		Integrating over $(0,t)$, we finish the proof.
	\end{proof}

	\begin{lemma}\label{lem_w_boundary}
		For any $t \in (0,T)$, $(w_1 - w_2)|_{\pa \Omega} = 0$.
	\end{lemma}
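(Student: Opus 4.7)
The plan is to handle the two endpoints of $\partial\Omega = \{0,1\}$ separately. The value at $x=0$ is trivial: by the very definition $w_i(t,0) = \int_0^0 (u_i+v_i)\,dy = 0$, so $(w_1-w_2)(t,0) = 0$. The remaining identity $w_1(t,1) = w_2(t,1)$ reduces, after using that $(u_1,v_1)$ and $(u_2,v_2)$ share the common initial data $(u^0,v^0)$ prescribed in \eqref{reg_init_time}, to the $L^1(\Omega)$-conservation of $u+v$ for every weak solution satisfying \eqref{eqt_u_weak}--\eqref{est}.

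To establish this conservation I would test \eqref{eqt_u_weak} and \eqref{eqt_v_weak} with $\varphi(t,x) = \chi(t)$ for arbitrary $\chi \in C_0^\infty([0,\infty))$. Since $\overline{\Omega}$ is compact such a $\varphi$ belongs to $C_0^\infty([0,\infty)\times\overline{\Omega})$, so the substitution is legal with no mollification needed. Because $\varphi_x \equiv 0$, all flux terms $-\int\int uv u_x\varphi_x$, $\int\int u^2 v v_x\varphi_x$ and $-\int\int v_x\varphi_x$ vanish; adding the two resulting identities the reaction contributions $\pm\int\int uv\varphi$ cancel, leaving
\[
-\int_0^\infty \chi'(t)\,M(t)\,dt \;=\; \chi(0)\int_\Omega (u^0+v^0)\,dx,\qquad M(t) := \int_\Omega \bigl(u(t,\cdot) + v(t,\cdot)\bigr)\,dx.
\]
This says the distributional time derivative of $M$ vanishes on $(0,\infty)$ and that $M$ attains the initial trace $\int_\Omega (u^0+v^0)\,dx$. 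From \eqref{est} we have $u,v \in C^0([0,\infty)\times\overline{\Omega})$, so $M$ is continuous, hence constant, equal to $\int_\Omega (u^0+v^0)\,dx$.

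Applying this conservation to both weak solutions and using that they start from the same $(u^0,v^0)$, we obtain $\int_\Omega (u_1+v_1)(t,\cdot)\,dx = \int_\Omega (u_2+v_2)(t,\cdot)\,dx$ for every $t \in (0,T)$, i.e. $w_1(t,1) = w_2(t,1)$. Together with the vanishing at $x=0$ this proves $(w_1-w_2)|_{\partial\Omega} = 0$. I do not anticipate a genuine obstacle here; the only points requiring slight care are the admissibility of spatially constant test functions on the bounded interval and the observation that the structure of \eqref{eqt} produces the exact cancellation of the reaction source upon adding the two equations, which is precisely why the antiderivative $w$ is a useful object.
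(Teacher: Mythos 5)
Your proposal is correct and follows essentially the same route as the paper: the value at $x=0$ is immediate from the definition of $w_i$, and the value at $x=1$ is handled by testing the sum of the two weak formulations with a spatially constant test function so that the flux terms vanish and the reaction terms $\pm uv$ cancel, yielding conservation of $\int_\Omega(u+v)\,dx$. The only cosmetic difference is that the paper inserts $\varphi=1$ into a time-localized form of the weak identity, whereas you recover the constancy of $M(t)$ from the distributional statement plus the continuity of $u$ and $v$ in \eqref{est}; both are valid.
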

	\begin{proof}
		Clearly  $(w_1 - w_2)|_{x=0}=0$.  We only need to prove $(w_1 - w_2)|_{x=1}=0$.
		It follows from \eqref{eqt_u_weak} and \eqref{eqt_v_weak} that for any $\varphi \in C^\infty([0,T]\times\overline{\Omega})$,
		\begin{align}
			&-\int_0^t\int_{\Omega} (u_i + v_i) \pa_t\varphi dxds
			+\int_{\Omega} (u_i + v_i) \varphi\Big|_{s=t} dx
			-\int_{\Omega} (u_i + v_i) \varphi\Big|_{s=0} dx\\
			&\qquad\qquad\qquad\qquad=
			-\int_0^t\int_{\Omega} u_i v_i u_{ix} \varphi_x dxds
			+ \int_0^t\int_{\Omega} u_i^2 v_i v_{ix} \varphi_x dxds
			-\int_0^t \int_{\Omega} v_{ix}\varphi_x dxdt . \nonumber
		\end{align}
		Letting $\varphi = 1$, one has
		$ \int_{\Omega} (u_i + v_i) \Big|_{s=t} dx=\int_{\Omega} (u_i + v_i) \Big|_{s=0} dx,$
		and hence the conclusion by noting that $w_i|_{x=1} = \int_{\Omega} (u_i + v_i)\Big|_{s=t} dx$ and $u_1^0 = u_2^0,v_1^0 = v_2^0$.
	\end{proof}

	\begin{lemma}\label{lem_w^2+v^2}
		There exists $C > 0$ such that for any $t \in (0,T)$,
		\begin{align}\label{ineqt_wv}
			&\int_{\Omega} \frac{1}{2} (w_1 - w_2)^2 dx
			+\int_{\Omega} \frac{1}{2}(v_1 - v_2)^2  dx
			+ \frac{c(T)}{32}\int_0^t\int_{\Omega} (u_1 + u_2) (w_{1x} - w_{2x})^2  dxds\\
			&\qquad
			+\frac{1}{8}\int_0^t\int_{\Omega} (v_{1x} - v_{2x})^2  dxds
			\leq
			C\int_0^t\int_{\Omega}(w_1 - w_2)^2 dxds
			+ C\int_0^t\int_{\Omega} (v_1 - v_2)^2  dxds \nonumber,
		\end{align}
		where $c(T)>0$ is the lower bound of $v_i$ in \eqref{est}.
	\end{lemma}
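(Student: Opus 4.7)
The plan is to derive two energy identities---one for $\tfrac{1}{2}\|w_1-w_2\|_{L^2(\Omega)}^2$ directly from Lemma \ref{eqt_w^2}, and one for $\tfrac{1}{2}\|v_1-v_2\|_{L^2(\Omega)}^2$ by testing the difference of the two weak formulations \eqref{eqt_v_weak} with $v_1-v_2$ after a mollification argument analogous to the one in the proof of Lemma \ref{eqt_w^2}---and then to sum them and absorb all cross terms into the two dissipation terms on the left of \eqref{ineqt_wv} via Young's inequality, using \eqref{est} and Lemma \ref{u^-1}.

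For the $w$-identity, in the first summand of Lemma \ref{eqt_w^2} one first integrates by parts via $v_1(u_1^2)_x - v_2(u_2^2)_x = (v_1u_1^2 - v_2u_2^2)_x - (v_{1x}u_1^2 - v_{2x}u_2^2)$, with the boundary contribution vanishing by Lemma \ref{lem_w_boundary}. The algebraic splittings $v_1u_1^2 - v_2u_2^2 = v_1(u_1+u_2)(u_1-u_2) + u_2^2(v_1-v_2)$ and $u_1-u_2 = (w_{1x}-w_{2x}) - (v_1-v_2)$ then isolate the principal dissipation
\[
-\tfrac{1}{2}\int_\Omega v_1(u_1+u_2)(w_{1x}-w_{2x})^2\,dx \,\leq\, -\tfrac{c(T)}{2}\int_\Omega (u_1+u_2)(w_{1x}-w_{2x})^2\,dx.
\]
Applying analogous decompositions to $u_1^2 v_1 v_{1x}-u_2^2 v_2 v_{2x}$ and to $v_{1x}u_1^2 - v_{2x}u_2^2$ shows that every remaining summand is a product in which at least one factor is $L^\infty$-bounded by \eqref{est}; weighted Young then reduces each such term to a contribution of the form $\eta(u_1+u_2)(w_{1x}-w_{2x})^2 + \eta(v_{1x}-v_{2x})^2 + C_\eta[(w_1-w_2)^2 + (v_1-v_2)^2]$.

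The analogous identity for $v$ reads $\tfrac{d}{dt}\int_\Omega\tfrac{1}{2}(v_1-v_2)^2\,dx + \int_\Omega(v_{1x}-v_{2x})^2\,dx = -\int_\Omega(u_1v_1-u_2v_2)(v_1-v_2)\,dx$. The splitting $u_1v_1-u_2v_2 = u_2(v_1-v_2) + v_1(u_1-u_2)$, followed once more by $u_1-u_2 = (w_{1x}-w_{2x})-(v_1-v_2)$, reduces everything on the right to trivial terms plus the delicate cross product $-\int_\Omega v_1(w_{1x}-w_{2x})(v_1-v_2)\,dx$, which Young with weights $\sqrt{u_1+u_2}$ and $1/\sqrt{u_1+u_2}$ converts to an absorbable $\eta(u_1+u_2)(w_{1x}-w_{2x})^2$ piece plus $\tfrac{1}{4\eta}\int_\Omega v_1^2(v_1-v_2)^2/(u_1+u_2)\,dx$, precisely the quantity flagged in the introduction.

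The hard part is controlling this last integral. From $\|v_1\|_\infty\leq C$ in \eqref{est}, the pointwise bound $1/(u_1+u_2)\leq 1/u_1$, and Lemma \ref{u^-1}, one obtains $\int_\Omega v_1^2(v_1-v_2)^2/(u_1+u_2)\,dx \leq C\|v_1-v_2\|_\infty^2$. The one-dimensional Gagliardo-Nirenberg inequality $\|f\|_\infty^2 \leq 2\|f\|_{L^2}\|f_x\|_{L^2} + \|f\|_{L^2}^2$, applied to $f = v_1-v_2$ together with a further Young, bounds this in turn by $\eps\|v_{1x}-v_{2x}\|_{L^2}^2 + C_\eps\|v_1-v_2\|_{L^2}^2$, with $\eps$ chosen small enough to be absorbed into the $v$-dissipation on the left. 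Summing the two identities, integrating in $s\in(0,t)$, and finally choosing all the $\eta$'s and $\eps$ small enough (relative to $c(T)$ and $1$) produces \eqref{ineqt_wv} with the advertised fractions $c(T)/32$ and $1/8$. Without the $L^1$-control of $1/u$ from Lemma \ref{u^-1}, this absorbing step would fail; that is the true obstacle in the proof.
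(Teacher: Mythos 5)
Your proposal is correct and follows essentially the same route as the paper: the same integration by parts with the boundary terms killed by Lemma \ref{lem_w_boundary}, the same substitution $u_1-u_2=(w_{1x}-w_{2x})-(v_1-v_2)$ isolating the dissipation $-\tfrac12\int v_1(u_1+u_2)(w_{1x}-w_{2x})^2$, the same weighted Young producing $v_1^2(v_1-v_2)^2/(u_1+u_2)$, and the same control of that term via Lemma \ref{u^-1} together with the one-dimensional embedding (your Gagliardo--Nirenberg step is the paper's $W^{1,1}(\Omega)\hookrightarrow L^\infty(\Omega)$ applied to $(v_1-v_2)^2$). The only cosmetic difference is that the paper regularizes the denominator as $u_1+u_2+\delta$ before applying weighted Young and sends $\delta\to0$ at the end, whereas you divide by $u_1+u_2$ directly, which is justified a.e.\ since Lemma \ref{u^-1} gives $u_1>0$ a.e.
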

	\begin{proof}
		{\it Step $1$: estimate of $\Vert  w_1 - w_2  \Vert_{L^2(\Omega)}^2$.}
		Recalling Lemma \ref{eqt_w^2},
		\begin{align*}
			\int_{\Omega} \frac{1}{2} (w_1 - w_2)^2 dx
			=& \frac{1}{2}\int_0^t\int_{\Omega}   \Big( v_1  (u_1^2)_x
			-  v_2  (u_2^2)_x \Big)  (w_1 - w_2) dxds\\
			&- \int_0^t\int_{\Omega}  (u_1^2 v_1 v_{1x} - u_2^2 v_2 v_{2x})(w_1 - w_2) dxds\\
			&+ \int_0^t\int_{\Omega}   (v_{1x} - v_{2x}) (w_1 - w_2)dxds\\
			=&: I_1 + I_2 + I_3.
		\end{align*}
		
		We first estimate $I_1$.
		\begin{align*}
			I_1=& \frac{1}{2}\int_0^t\int_{\Omega}   v_1 (u_1^2
			-  u_2^2)_x  (w_1 - w_2) dxds
			+ \frac{1}{2}\int_0^t\int_{\Omega}   (v_1 - v_2)
			(u_2^2)_x  (w_1 - w_2) dxds.
		\end{align*}
		Integrating by parts, in view of Lemma \ref{lem_w_boundary}, one has
		\begin{align*}
			I_1=& -\frac{1}{2}\int_0^t\int_{\Omega}   v_{1x}  (u_1^2
			-  u_2^2)
			(w_1 - w_2) dxds
			-\frac{1}{2}\int_0^t\int_{\Omega}   v_1  (u_1^2
			-  u_2^2)
			(w_1 - w_2)_x dxds\\
			&- \frac{1}{2}\int_0^t\int_{\Omega}   (v_1 - v_2)_x
			u_2^2   (w_1 - w_2) dxds
			- \frac{1}{2}\int_0^t\int_{\Omega}   (v_1 - v_2)
			u_2^2   (w_1 - w_2)_x dxds.
		\end{align*}
		It follows from
		\begin{align}\label{eqt_u_1^2-u_2^2}
			u_1^2 -  u_2^2
			=(u_1 + u_2)  (u_1 - u_2)
			=&(u_1 + u_2) (w_{1x} - v_1 - w_{2x} + v_2)\\
			=&(u_1 + u_2) (w_{1x} - w_{2x})
			-(u_1 + u_2) (v_1 - v_2) \nonumber
		\end{align}
		that
		\begin{align*}
			I_1=&-\frac{1}{2}\int_0^t\int_{\Omega}   v_{1x}  (u_1 + u_2) (w_{1x} - w_{2x})
			(w_1 - w_2) dxds\\
			&+\frac{1}{2}\int_0^t\int_{\Omega}   v_{1x}  (u_1 + u_2) (v_1 - v_2)
			(w_1 - w_2) dxds\\
			&-\frac{1}{2}\int_0^t\int_{\Omega}   v_1  (u_1 + u_2) (w_{1x} - w_{2x})
			(w_1 - w_2)_x dxds\\
			&+\frac{1}{2}\int_0^t\int_{\Omega}   v_1  (u_1 + u_2) (v_1 - v_2)
			(w_1 - w_2)_x dxds\\
			&- \frac{1}{2}\int_0^t\int_{\Omega}   (v_1 - v_2)_x
			u_2^2   (w_1 - w_2) dxds
			- \frac{1}{2}\int_0^t\int_{\Omega}   (v_1 - v_2)
			u_2^2   (w_1 - w_2)_x dxds\\
			=:&I_{11} + \cdots + I_{16}.
		\end{align*}
		Recalling $v_i > c(T)> 0$, one has
		\begin{align*}
			I_{13}=& -\frac{1}{2}\int_0^t\int_{\Omega}   v_1  (u_1 + u_2) (w_{1x} - w_{2x})^2  dxds
			\leq - \frac{c(T)}{2}\int_0^t\int_{\Omega}    (u_1 + u_2)(w_{1x} - w_{2x})^2  dxds.
		\end{align*}
		It follows from Young's inequality and \eqref{est} that
		\begin{align*}
			I_{11}
			\leq& \frac{c(T)}{8}\int_0^t\int_{\Omega} (u_1 + u_2) (w_{1x} - w_{2x})^2  dxds
			+ C\int_0^t\int_{\Omega}  v_{1x}^2  (u_1 + u_2) (w_1 - w_2)^2 dxds\\
			\leq& \frac{c(T)}{8}\int_0^t\int_{\Omega} (u_1 + u_2) (w_{1x} - w_{2x})^2  dxds
			+ C\int_0^t\int_{\Omega}(w_1 - w_2)^2 dxds.
		\end{align*}
		Similarly
		\begin{align*}
			I_{12}
			\leq& C\int_0^t\int_{\Omega}  (v_1 - v_2)^2  dxds
			+ C\int_0^t\int_{\Omega} (w_1 - w_2)^2 dxds,~\\
			I_{14}
			\leq& \frac{c(T)}{8}\int_0^t\int_{\Omega} (u_1 + u_2) (w_{1x} - w_{2x})^2  dxds
			+ C\int_0^t\int_{\Omega}(v_1 - v_2)^2 dxds,~\\
			I_{15}
			\leq& \frac{1}{2}\int_0^t\int_{\Omega}   (v_{1x} - v_{2x})^2 dxds
			+ C \int_0^t\int_{\Omega}  (w_1 - w_2)^2 dxds,~\\
			I_{16}
			\leq&\frac{c(T)}{8}\int_0^t\int_{\Omega}
			(u_1 + u_2)  (w_{1x} - w_{2x}) ^2  dxds
			+ C\int_0^t\int_{\Omega}   (v_1 - v_2)^2 dxds.
		\end{align*}
		In conclusion, one has
		\begin{align*}
			I_1\leq&
			-\frac{c(T)}{8}\int_0^t\int_{\Omega}
			(u_1 + u_2)  (w_{1x} - w_{2x}) ^2  dxds
			+ \frac{1}{2}\int_0^t\int_{\Omega}   (v_{1x} - v_{2x})^2 dxds\\
			&\qquad\qquad\qquad + C\int_0^t\int_{\Omega}   (v_1 - v_2)^2 dxds
			+ C\int_0^t\int_{\Omega}   (w_1 - w_2)^2 dxds.
		\end{align*}
		
		Next we estimate $I_2$.
		\begin{align*}
			I_2
			= &-  \int_0^t\int_{\Omega}  (u_1^2 - u_2^2)v_1 v_{1x}(w_1 - w_2) dxds\\
			&-  \int_0^t\int_{\Omega}  (v_1  - v_2)v_{1x} u_2^2(w_1 - w_2) dxds
			-  \int_0^t\int_{\Omega}  (v_{1x} - v_{2x})v_2 u_2^2(w_1 - w_2) dxds,~
		\end{align*}
		In view of \eqref{eqt_u_1^2-u_2^2}, one has
		\begin{align*}
			I_2
			= &-  \int_0^t\int_{\Omega}  (u_1 + u_2) (w_{1x}- w_{2x}) v_1 v_{1x}(w_1 - w_2) dxds\\
			&+  \int_0^t\int_{\Omega}  (u_1 + u_2) (v_1 - v_2) v_1 v_{1x}(w_1 - w_2) dxds\\
			&-  \int_0^t\int_{\Omega}  (v_1  - v_2)v_{1x} u_2^2(w_1 - w_2) dxds
			-  \int_0^t\int_{\Omega}  (v_{1x} - v_{2x})v_2 u_2^2(w_1 - w_2) dxds.
		\end{align*}
		Similar to the estimating of $I_1$, we have
		\begin{align*}
			I_2
			\leq& \frac{c(T)}{16}\int_0^t\int_{\Omega} (u_1 + u_2) (w_{1x} - w_{2x})^2  dxds
			+\frac{1}{8}\int_0^t\int_{\Omega} (v_{1x} - v_{2x})^2  dxds\\
			&\qquad\qquad\qquad+ C\int_0^t\int_{\Omega}(w_1 - w_2)^2 dxds
			+ C\int_0^t\int_{\Omega} (v_1 - v_2)^2  dxds.
		\end{align*}
		The  estimate of $I_3$ follows from Young's inequality
		\begin{align*}
			I_3= \int_0^t\int_{\Omega}  (v_{1x} - \!v_{2x}) (w_1 - w_2)dxds
			\leq  \frac{1}{8}\int_0^t\int_{\Omega} (v_{1x} -\! v_{2x})^2  dxds
			+ C\int_0^t\int_{\Omega} (w_1 - \!w_2)^2  dxds .
		\end{align*}
		Combining the estimates of $I_1,I_2$ and $I_3$, one has
		\begin{align}
			\int_{\Omega} \frac{1}{2} (w_1 - w_2)^2 dx
			\leq&-\frac{c(T)}{16}\int_0^t\int_{\Omega} (u_1 + u_2) (w_{1x} - w_{2x})^2  dxds
			+\frac{3}{4}\int_0^t\int_{\Omega} (v_{1x} - v_{2x})^2  dxds\nonumber\\
			&\qquad+ C\int_0^t\int_{\Omega}(w_1 - w_2)^2 dxds
			+ C\int_0^t\int_{\Omega} (v_1 - v_2)^2  dxds. \label{ineqt_w}
		\end{align}
		
		{\it Step $2$: estimate of $\Vert  v_1 - v_2  \Vert_{L^2(\Omega)}^2$.} Recalling $u_1 - u_2 = w_{1x} - w_{2x} - (v_1 - v_2)$, one has
		\begin{align*}
			\pa_t (v_1 - v_2)
			&= (v_1 - v_2)_{xx} - (u_1v_1 - u_2v_2)
			= (v_{1x} - v_{2x})_{x} - (u_1 - u_2)v_1 - u_2(v_1 - v_2)\\
			&= (v_{1x} - v_{2x})_{x} - (w_{1x} - w_{2x})v_1
			+ (v_1 - v_2)v_1 - u_2(v_1 - v_2).
		\end{align*}
		Testing with $v_1 - v_2$ and integrating over $(0,t)\times\Omega$, we have
		\begin{align*}
			\int_{\Omega} \frac{1}{2}(v_1 - v_2)^2  dx
			= &-\int_0^t\int_{\Omega} (v_{1x} - v_{2x})^2 dxds
			- \int_0^t\int_{\Omega} (w_{1x} - w_{2x})v_1(v_1 - v_2)  dxds\\
			&+ \int_0^t\int_{\Omega} (v_1 - v_2)v_1(v_1 - v_2)  dxds
			- \int_0^t\int_{\Omega} u_2(v_1 - v_2)(v_1 - v_2)  dxds.
		\end{align*}
		It follows from Young's inequility and \eqref{est} that for any $\delta > 0$,
		\begin{align}
			\int_{\Omega} \frac{1}{2}(v_1 - v_2)^2  dx
			\leq &-\int_0^t\int_{\Omega} (v_{1x} - v_{2x})^2 dxds
			+ \frac{c(T)}{32} \int_0^t\int_{\Omega} (u_1 + u_2 + \delta) (w_{1x} - w_{2x})^2  dxds\nonumber\\
			&+ C\int_0^t\int_{\Omega} \frac{v_1^2(v_1 - v_2)^2}{u_1 + u_2 + \delta} dxds
			+ C \int_0^t\int_{\Omega} (v_1 - v_2)^2  dxds.\label{ineqt_v_pre}
		\end{align}
		In view of Lemma \ref{u^-1} and \eqref{est}, one has
		\begin{align*}
			\int_0^t\int_{\Omega} \frac{v_1^2(v_1 - v_2)^2}{u_1 + u_2 + \delta} dxds
			\leq&  C \int_0^t\Vert  (v_1 - v_2)^2  \Vert_{L^\infty(\Omega)} 	
			\left(  \int_{\Omega} \frac{1}{u_1 + u_2 + \delta} dx  \right)ds\\
			\leq& C\int_0^t \Vert  (v_1 - v_2)^2  \Vert_{L^\infty(\Omega)}ds.
		\end{align*}
		It follows from $W^{1,1}(\Omega) \hookrightarrow L^\infty(\Omega)$ that
		\begin{align*}
			\int_0^t  \Vert  (v_1 - v_2)^2  \Vert_{L^\infty(\Omega)} ds
			\leq& C \int_0^t\int_{\Omega} (v_1 - v_2)^2 dxds
			+ 2C \int_0^t\int_{\Omega} |(v_1 - v_2)(v_1 - v_2)_x| dxds\\
			\leq& C \int_0^t\int_{\Omega} (v_1 - v_2)^2 dxds
			+ \frac{1}{8} \int_0^t\int_{\Omega} (v_{1x} - v_{2x})^2 dxds.
		\end{align*}
		Therefore
		\begin{align}\label{ineqt_v_pre_2}
			\int_{\Omega} \frac{1}{2}(v_1 - v_2)^2  dx
			= &-\frac{7}{8}\int_0^t\int_{\Omega} (v_{1x} - v_{2x})^2 dxds\\
			+ \frac{c(T)}{32} &\int_0^t\int_{\Omega} (u_1 + u_2 + \delta) (w_{1x} - w_{2x})^2  dxds
			+ C \int_0^t\int_{\Omega} (v_1 - v_2)^2  dxds.\nonumber
		\end{align}
		Letting $\delta \to 0$ and in view of \eqref{ineqt_w}, we finishes the proof.
	\end{proof}
	\noindent {\it Proof of Theorem \ref{thm_main}.} It follows from Lemma \ref{lem_w^2+v^2} and Gronwall inequality that $w_1 = w_2,v_1=v_2$ and hence $u_1 = u_2$ for a.e.$(t,x) \in (0,T)\times \Omega $. The arbitrariness of $T$ concludes the proof.
	
	$\hfill \qedsymbol$

\end{document}